\documentclass[12pt]{article}
\usepackage[utf8]{inputenc}
\usepackage{amsmath, amssymb, amsthm}
\usepackage{booktabs}
\usepackage{graphicx}
\usepackage{float}
\usepackage{hyperref}
\usepackage{geometry}
\usepackage{setspace}
\usepackage{enumitem}
\usepackage{array}
\usepackage{multirow}
\usepackage{xcolor}
\usepackage{algorithm}
\usepackage{algpseudocode}
\usepackage{caption}
\usepackage{subcaption}
\usepackage[numbers,sort&compress]{natbib}
\usepackage{titlesec}
\usepackage{afterpage}

\hypersetup{
    colorlinks=true,
    linkcolor=blue!60!black,
    citecolor=blue!60!black,
    urlcolor=blue!60!black
}

\theoremstyle{definition}
\newtheorem{definition}{Definition}[section]
\newtheorem{proposition}{Proposition}[section]
\newtheorem{remark}{Remark}[section]

\titleformat{\section}{\large\bfseries}{\thesection}{1em}{}
\titleformat{\subsection}{\normalsize\bfseries}{\thesubsection}{1em}{}
\titleformat{\subsubsection}{\normalsize\itshape}{\thesubsubsection}{1em}{}

\title{\textbf{Recursive Governance: A Graph-Theoretic Framework for \\ Risk Propagation and Drift Detection in Agentic AI Systems}}
\author{Advaith Nila Narayanan, The University of Massachusetts, Amherst	\footnote{anilanarayan@umass.edu} \\ Sriram Nagaraj, PhD \footnote{dr.sri.nagaraj@gmail.com, Corresponding author} }
\date{}

\begin{document}

\maketitle
\thispagestyle{empty}

\begin{abstract}
\noindent
As financial institutions transition from traditional predictive models to
autonomous agentic systems, the static model inventory requirements of traditional model risk management (MRM) face structural obsolescence. These
systems introduce a class of risk---\emph{Agency Risk}---that existing MRM frameworks are ill-equipped to quantify:
non-deterministic reasoning paths, dynamic tool invocation, and cascading
inter-agent dependencies that cannot be fully enumerated at registration time.
Prior treatments of LLM governance address these symptoms in isolation---output
bias testing here, hallucination benchmarking there---without a unifying
structural account of \emph{why} agentic systems break the inventory paradigm
or a mechanism for containing the resulting risk once it manifests. The
central novelty of this paper is to close that gap with a single, internally
consistent governance loop rather than a collection of point fixes: we are,
to our knowledge, among the first to (i)~formalize the agent inventory itself as a
directed graph over which validation failures provably propagate, (ii)~give a
materiality score that is provably immune to tool-count inflation, and
(iii)~ground reasoning-trajectory drift detection in a calibration procedure
that matches the production test statistic rather than approximating it.

This paper proposes a dynamic \emph{Inventory-as-Code} (IaC) governance loop
that treats the model inventory as a living architectural component rather
than a periodic documentation artifact. We make four principal contributions.
First, we introduce a calibrated \emph{Degree of Autonomy} (DoA) materiality
score with an explicit, taxonomized tool-complexity weighting scheme that
addresses the dominance problem of naive additive risk formulations, and we
prove that tool breadth alone can never lift an agent's tier---closing a
gaming vector that additive scores leave open. Second,
we construct the agent inventory as a Directed Acyclic Graph (DAG) and define
a formal \emph{Composite Risk Propagation} algorithm under which upstream
validation failures induce risk penalties on all reachable descendants; we
further show that the blocking condition is properly decoupled from an
agent's inherent risk tier, so that high-tier agents are not
auto-quarantined by their own risk scores, and we prove that the resulting
blast radius equals exactly the transitive descendant set of a failure---a
containment guarantee that holds by construction rather than as a numerical
byproduct of penalty calibration. Third, we develop a
\emph{Trajectory Monitoring} protocol based on distributional cosine drift
across ensembled Chain-of-Thought (CoT) embeddings, with an explicit
procedure for constructing and certifying the \emph{Golden Path} baseline,
a matched-bootstrap calibration that we show is necessary to avoid a severe
false-positive artifact in the naive alternative, and a two-stage response
that separates legitimate reasoning variation from detrimental drift without
demanding that a single exceedance event trigger an irreversible action.
Fourth, we address practical complications largely absent from the prior
literature: LLM base-model version changes, latent feedback loops in
nominally acyclic agent graphs, and a two-pass execution structure that
stages validation ahead of risk propagation. Throughout, we motivate each
design choice against the specific failure mode it is meant to close, rather
than presenting the mechanisms as free-standing formalism.

We illustrate the framework with a fully reproducible simulation study on
a 17-agent inventory (random seed 42; code provided as a companion script).
A single Tier~1 failure cascades to block exactly the 6 agents in its
transitive descendant set while leaving the remaining 10 agents, including
the 3 other Tier~1 roots, operationally validated. Calibrating the trajectory
monitor with a matched bootstrap that mirrors the production test statistic,
we obtain a clean separation: all in-distribution validated agents remain
within bounds while every agent subject to genuine trajectory drift is
hard-flagged. We further show that the naive half-split calibration common in
drift detection instead soft-flags roughly 90\% of in-distribution Tier~1
agents---a sample-size artifact that the matched calibration removes.
\end{abstract}

\newpage
\tableofcontents

% ─────────────────────────────────────────────────────────────────────────────
\section{Introduction}
% ─────────────────────────────────────────────────────────────────────────────

Traditional model risk management (MRM) treats a \emph{model} as a quantitative
method, system, or approach that applies statistical, economic, financial, or
mathematical theories, techniques, and assumptions to process input data into
quantitative estimates. Mature MRM practice requires an institution to maintain
a model inventory that captures all models in use and assigns each a risk tier
commensurate with its materiality.

For over a decade, this framework had governed traditional predictive
models---credit scorecards, prepayment models, VaR engines---with reasonable
adequacy. The model's inputs, transformations, and outputs are knowable at
registration time. Validation is a point-in-time exercise. Inventory updates
are periodic and manual.

Large Language Model (LLM)-based agentic systems violate many, if not all,  of these
assumptions. An agent does not produce a single output from a fixed
transformation; it reasons iteratively, invokes external tools, delegates
sub-tasks to other agents~\citep{schick2023, yao2023, autogen2023}, and may arrive at different action sequences given
identical prompts. Its ``model'' cannot be reduced to a weight matrix because
the effective computation includes retrieval corpora, tool APIs, orchestration
logic, and the stochastic sampling of the underlying LLM. Most critically, it
\emph{acts}: it writes to databases, executes trades, generates customer
communications, and calls downstream agents that themselves act.

\subsection*{Causes: Why Agentic Systems Are Structurally Different}

Four properties, largely absent from the traditional model taxonomy, jointly
drive this departure.

\begin{itemize}[leftmargin=2em]
    \item \textbf{Non-determinism at the reasoning level, not just the output
    level.} Traditional models are stochastic only in a bounded, well-characterized
    sense (e.g., sampling noise around a fitted estimate). An LLM agent's
    \emph{path} to an output---which tools it calls, in what order, with what
    intermediate justifications---can vary materially across otherwise identical
    invocations, because the sampling occurs at every reasoning step, not once at
    the end.
    \item \textbf{Dynamic, self-directed tool invocation.} An agent selects which
    external systems to query or act upon at runtime, based on its own
    interpretation of the task. The realized set of tool calls for a given
    session is therefore a function of the agent's reasoning, not a fixed
    input/output contract fixed at development time.
    \item \textbf{Delegation and inter-agent composition.} Multi-agent
    architectures allow one agent to invoke another as a subroutine
    \citep{autogen2023}. This means an agent's effective behavior depends on the
    current state and validation status of agents it did not directly configure
    and may not even be aware of, creating dependency chains that are invisible
    to any single-model view of risk.
    \item \textbf{Exposure to uncontrolled upstream change.} The underlying LLM
    backbone, its safety tuning, and any external retrieval corpora it draws on
    are typically controlled by a third-party provider and can change on a
    release cadence entirely outside the institution's own change-control
    process. A model that was fully validated yesterday may be running on
    materially different weights today without any internally-initiated model
    change event.
\end{itemize}

Each of these causes is why a static, registration-time inventory entry is the
wrong governance primitive for an agentic system: the very quantities a
traditional inventory records as fixed attributes---inputs, outputs, and
validation status---are, for an agent, functions of runtime state.

\subsection*{Effects: Why This Matters Operationally}

These structural causes translate into concrete operational and financial
consequences, which is why we treat Agency Risk as a distinct risk category
rather than an extension of existing model risk taxonomies.

\begin{itemize}[leftmargin=2em]
    \item \textbf{Cascading failure across dependency chains.} Because agents
    consume one another's outputs, a validation failure or behavioral
    degradation in one upstream agent silently propagates its error into every
    downstream consumer, potentially several hops removed from the point of
    failure and undetectable from any single agent's own test results.
    \item \textbf{Undetected reasoning drift beneath stable outputs.} An agent
    can preserve a stable output distribution---the quantity classical
    monitoring tests---while its underlying reasoning shifts to a different,
    potentially exploitable justification for that output (Section~\ref{sec:trajectory}). Output-only
    monitoring is therefore blind to an entire class of behavioral change.
    \item \textbf{Uncontrolled expansion of the action surface.} Because agents
    can write to production systems---executing trades, issuing credit
    decisions, sending customer communications---an ungoverned agent does not
    merely produce a wrong number for a human to catch; it can directly cause
    financial loss, data breach, or customer harm before any human review
    occurs.
    \item \textbf{Audit and attestation failure.} If the inventory cannot
    account for an agent's current dependency graph, tool access, and
    validation status in real time, the institution cannot truthfully attest
    that its models are governed, independent of whether any
    individual agent is in fact behaving safely. This is a compliance risk in
    its own right, separate from the underlying behavioral risk.
\end{itemize}

The existing MRM literature has begun to address LLM governance in
isolation~\citep{nist_airmf}---focusing on output bias testing, hallucination rates, and
distributional drift on model outputs. These efforts address individual
symptoms of the causes above but not their common structural source: they
treat each agent as if it were still a standalone model with a fixed
input/output contract. What is absent is a systemic treatment
of the \emph{inter-agent} risk structure: the graph of dependencies, the
propagation of validation failures, and the formal monitoring of reasoning
trajectories as first-class MRM objects. This paper fills that gap.

The paper proceeds as follows. Section~\ref{sec:background} reviews traditional MRM background and notes the specific provisions that require
extension in the generative AI (genAI) context. Section~\ref{sec:iac} introduces the Inventory-as-Code
architecture. Section~\ref{sec:doa} develops the Degree of Autonomy
materiality scoring framework. Section~\ref{sec:dag} formalizes the DAG
model of agent interdependence and derives the Composite Risk Propagation
algorithm. Section~\ref{sec:trajectory} develops the Trajectory Monitoring
protocol in full. Section~\ref{sec:practical} addresses practical
implementation challenges. Section~\ref{sec:simulation} presents the
simulation study. Section~\ref{sec:conclusion} identifies open problems and
concludes.

% ─────────────────────────────────────────────────────────────────────────────
\section{MRM Background and the Agentic Gap}
\label{sec:background}
% ─────────────────────────────────────────────────────────────────────────────

\subsection{MRM: Core Provisions}

Traditional MRM established three pillars of model risk management: (1)~\emph{model
development, implementation, and use}; (2)~\emph{model validation}; and
(3)~\emph{governance, policies, and controls}. These three pillars exist for a
specific reason: development/use controls limit the risk that a model is built
or deployed outside its intended scope; validation controls limit the risk
that a model's actual performance diverges from its assumed performance; and
governance controls limit the risk that either of the first two failures goes
undetected by the institution. Each pillar implicitly assumes that the object
being governed---the model---has a stable identity between governance
touchpoints: the same weights, the same input schema, the same output
contract, until an explicit change event occurs. The governance pillar is most
immediately implicated by agentic systems, precisely because agentic systems
violate this stable-identity assumption without necessarily triggering any of
the change events the governance pillar is designed to catch.

Standard MRM practice requires model inventories to capture: the model's
purpose and use, its inputs and outputs, the business line(s) it supports,
its materiality tier, its validation status, and its ongoing monitoring
status. For traditional models, each of these fields is statically knowable.
For agentic systems, several are dynamically determined at runtime:

\begin{itemize}[leftmargin=2em]
    \item \textbf{Purpose and use} may change without a model change event if
    the orchestration prompt is modified outside of change-control scope.
    \item \textbf{Inputs} include external API responses, retrieval-augmented
    document corpora, and outputs from upstream agents---none of which are
    fixed at registration.
    \item \textbf{Outputs} include not just numerical estimates but action
    sequences with real-world consequences.
    \item \textbf{Validation status} can become stale when the underlying LLM
    base model is updated by the provider---a change entirely outside the
    institution's change-control process.
\end{itemize}

\subsection{What MRM Does Not Address}

Standard MRM was not designed for systems that delegate. This is not an
oversight so much as a natural consequence of the era in which its provisions
were written: the pillars were designed against a population of models that
each had one owner, one input schema, and no ability to invoke one another.
Consequently the framework contains no
provisions for:

\begin{itemize}[leftmargin=2em]
    \item \textbf{Inter-model dependency tracking}: If Model A feeds Model B,
    traditional MRM would recommend awareness of the dependency but provides no formal
    protocol for propagating validation failures. This matters because the
    absence of a formal protocol leaves the propagation decision to ad hoc
    human judgment, which does not scale once dependency graphs exceed a
    handful of models and cannot execute at the speed agents themselves
    operate.
    \item \textbf{Non-deterministic outputs}: MRM guidance assumes that
    validation can establish a stable output distribution. However, agentic outputs
    are path-dependent. This is consequential because a single point-in-time
    validation pass certifies only the reasoning paths sampled during that
    pass; it says nothing about paths the agent has not yet taken but may take
    under different production inputs.
    \item \textbf{Tool access as a risk dimension}: A model that can only
    \emph{read} data is categorically different in risk profile from one that
    can \emph{write} to a trading system. MRM has no vocabulary for this,
    which means two agents that are otherwise identical in accuracy and scope
    can carry wildly different real-world consequences under a materiality
    scheme that only looks at outputs.
    \item \textbf{Emergent multi-agent behaviors}: No provision covers risk
    that arises from the \emph{interaction} of multiple
    individually-validated agents---the case where every agent passes its own
    behavioral test in isolation, yet the composed system produces an
    unintended outcome because each agent's test scope did not include the
    behavior of its counterparts.
\end{itemize}

\subsection{Why a Structural, Rather than Point, Fix Is Required}

A natural first response to each gap above is a point fix: add a dependency
field to the inventory schema, add a non-determinism disclosure to the model
documentation template, add a tool-access column. We argue this is
insufficient, for a reason that recurs throughout this paper: each gap is a
symptom of the same underlying cause---the inventory is treated as a record
of what a system \emph{is}, when for an agentic system the governance-relevant
facts are about what the system is currently \emph{doing} and \emph{connected
to}. Patching individual fields leaves the update mechanism unchanged: a
human-mediated, periodic review cycle that cannot keep pace with a system
whose dependency graph, tool exposure, and reasoning trajectory can all shift
between two review dates. Sections~\ref{sec:iac}--\ref{sec:trajectory}
therefore replace the record-keeping paradigm itself, rather than extending
its schema, which is why the remainder of this paper is organized around a
control loop rather than a checklist.

% ─────────────────────────────────────────────────────────────────────────────
\section{The Inventory-as-Code Architecture}
\label{sec:iac}
% ─────────────────────────────────────────────────────────────────────────────

\subsection{From Registry to Feedback Loop}

The conventional model inventory is a \emph{ledger}: a periodically-updated
table of models and their attributes. We argue that this paradigm is
architecturally incompatible with agentic systems for a fundamental reason:
the inventory can never be synchronized with the system state because the
relevant state variables change at agent runtime, not at human review cadence.

The \textbf{Inventory-as-Code} (IaC) paradigm treats the inventory as a
\emph{real-time feedback control system}. The inventory is not populated by
humans after the fact; it is an active participant in the agent's execution
lifecycle. Every state change in any agent---a failed behavioral test, a
drift exceedance, a dependency change---is immediately reflected in the
inventory graph and triggers downstream consequences without human
intermediation.

We term this arrangement \emph{recursive governance}: the inventory recursively
governs the very agents that constitute it, and any governance action on one
agent---a failure or a quarantine---propagates recursively along the dependency
graph to all of its transitive descendants (Section~\ref{sec:dag}), so the
control loop continually re-evaluates its own downstream state.

This is analogous to the Infrastructure-as-Code principle in DevOps, where
the system's desired state is expressed declaratively and enforcement is
continuous. Applied to model governance, it means the inventory is the
\emph{source of truth} for operational permissions: an agent that is not
\texttt{Validated} in the inventory cannot execute, regardless of its
technical deployment status.

\subsection{The Four-Phase IaC Cycle}

The IaC loop operates as a continuous cycle with four distinct phases. The
four-way split is deliberate: each phase closes a specific failure mode left
open by the others. Discovery alone would find agents but not assign them a
risk-commensurate governance regime; registration alone would assign a
regime but not verify the agent actually meets it; a validation gate alone
would verify point-in-time behavior but say nothing about whether that
behavior persists after deployment; and monitoring alone, without the
preceding three phases, would have no baseline to monitor against and no
agreed identity for the thing being monitored. The phases are therefore
sequential dependencies, not independent checklist items.

\textbf{Phase 1: Discovery.} Automated scanning of agent configuration
manifests---stored as version-controlled YAML or JSON
specifications---identifies the LLM backbone (model ID, provider, version
hash), the registered toolset, the system prompt hash, and declared upstream
dependencies. Discovery is triggered by any commit to the agent configuration
repository and runs on a scheduled cadence independent of commits.

\textbf{Phase 2: Registration.} Each discovered agent is assigned a Unique
Model Identifier (UMI) following the taxonomy in Section~\ref{sec:doa}, its
materiality tier is computed via the DoA formula, and its node is created in
the inventory DAG with edges to declared upstream dependencies. An agent
declaring a dependency on an unregistered upstream system is flagged for
manual review before receiving a \texttt{Validated} status.

\textbf{Phase 3: Validation Gate.} Before an agent may execute in any
environment above development, it must pass a tier-specific battery of
behavioral benchmarks administered by the \texttt{ValidationGuard}
subsystem. Tier~1 agents require adversarial prompt injection tests,
boundary condition tests on all tool-call parameters, and a minimum of 500
distinct CoT trajectory samples for Golden Path construction
(Section~\ref{sec:trajectory}). Tier~3 agents require only output
distribution checks. Passage is logged with a timestamp, the benchmark suite
version, and the system prompt hash.

\textbf{Phase 4: Ongoing Monitoring.} Post-deployment, the IaC system
continuously ingests runtime telemetry to update node statuses. Three
triggers can change a node's status without human intervention: (a) a Drift
Metric exceedance (Section~\ref{sec:trajectory}), (b) propagation of an
upstream failure (Section~\ref{sec:dag}), or (c) a base model version change
detected via provider API polling. Status changes propagate immediately
through the DAG.

% ─────────────────────────────────────────────────────────────────────────────
\section{Materiality Scoring: The Degree of Autonomy Framework}
\label{sec:doa}
% ─────────────────────────────────────────────────────────────────────────────

We now define the components that enter our empirical model that quantifies agent risk.

\subsection{Component Definitions}

\begin{definition}[Impact Score, $I$]
$I \in \{1,\ldots,10\}$ measures the maximum potential financial or
operational harm from an incorrect or adversarially-induced output.
Calibration benchmarks: $I=1$ (pure-read informational retrieval, no
downstream action); $I=5$ (generates recommendations consumed by a human
decision-maker with consequential authority); $I=10$ (direct write access
to a trading system or credit decisioning engine with no human-in-the-loop
confirmation). Values must be assigned by the business line owner and
countersigned by the Chief Risk Officer or delegate.
\end{definition}

\begin{definition}[Degree of Autonomy, $A$]
$A \in \{1,\ldots,5\}$ measures the degree to which the agent's action
sequence is governed by human oversight:

\medskip
\begin{center}
\begin{tabular}{cl}
\toprule
$A$ & Operational Description \\
\midrule
1 & Human-in-the-Loop: every action requires explicit approval \\
2 & Human-on-the-Loop: human may intervene; actions execute after timeout \\
3 & Supervised Autonomy: human reviews aggregated action logs daily \\
4 & Asynchronous Oversight: exceptions escalated; routine actions unreviewed \\
5 & Full Autonomy: no human review of individual actions \\
\bottomrule
\end{tabular}
\end{center}
\end{definition}

\begin{definition}[Weighted Tool Complexity Score, $\mathcal{T}$]
Each tool is assigned to a severity class $s \in \{1,2,3,4\}$ and the corresponding class weight is $w_s$. If $n_s$ is the number of \emph{registered} tools of severity class $s$ in the agent's declared toolset---not a runtime invocation count---then we set the 
aggregate score as:
\begin{equation}
\mathcal{T} = \min\!\left(\sum_{s} w_s\, n_s,\; T_{\max}\right)
\end{equation}
where $T_{\max} = 20$ caps
the contribution of tool count (cardinality). Note that $T_{\max} = 20$ is a design parameter chosen for our simulation study; a real-world implementation would calibrate it against operational data. 
\end{definition}

\subsection{Tool Severity Taxonomy}

\begin{table}[h]
\centering
\caption{Tool Severity Taxonomy for $\mathcal{T}$ Computation}
\label{tab:tool_taxonomy}
\begin{tabular}{@{}clcc@{}}
\toprule
Class & Tool Type & Weight $w_s$ & Examples \\
\midrule
1 & Read-only, internal data & 0.5 & Data warehouse queries, internal dashboards \\
2 & Read-only, external API  & 1.0 & Market data feeds, reference APIs \\
3 & Write-access, reversible & 2.5 & Draft generation, staging database writes \\
4 & Write-access, irreversible & 5.0 & Trade execution, credit decisions, wire transfers \\
\bottomrule
\end{tabular}
\end{table}

\subsection{Motivation and the Dominance Problem}

The simplest formulation of an agent risk score:\begin{equation}R = (I \times A) + \sum_{s} w_s\, n_s\end{equation}contains a structural flaw: the unbounded summation of tool
complexities can dominate the impact-autonomy product for any agent with a
large toolset, regardless of whether those tools are consequential. A
read-only indexer with dozens of endpoints (such as AG-17 in our inventory,
Section~\ref{sec:simulation}) should not be classified Tier~1 by virtue of
cardinality alone. We replace this with a weighted,
bounded formulation that separates the \emph{breadth} and \emph{severity} of
tool access.

\subsection{The Calibrated Inherent Risk Score}

\begin{equation}
\boxed{R = \alpha \cdot (I \times A) + (1 - \alpha) \cdot \mathcal{T}}
\label{eq:risk_score}
\end{equation}

where $\alpha \in (0,1)$ blends the impact-autonomy product against tool
access risk. We set $\alpha = 0.6$ throughout, so $R_{\max} = 0.6 \times 50
+ 0.4 \times 20 = 38.0$.

\begin{proposition}[Tier Assignment Rule]
With $\alpha=0.6$ and $T_{\max}=20$, the tier boundaries are:
\begin{equation}
\mathrm{Tier}(R) = \begin{cases}
1 & R \geq 0.75 \cdot R_{\max} = 28.5 \\
2 & 0.40 \cdot R_{\max} \leq R < 28.5 \quad (15.2 \leq R < 28.5)\\
3 & R < 0.40 \cdot R_{\max} = 15.2
\end{cases}
\end{equation}
\end{proposition}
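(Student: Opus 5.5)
This "proposition" is essentially definitional: the tier rule is stipulated as fractions $0.75$ and $0.40$ of $R_{\max}$. What needs proof is that the numbers are correct and that the three cases partition the attainable range of $R$. The first step is to recompute $R_{\max}$ from Equation~\eqref{eq:risk_score}. Since $I\le 10$ and $A\le 5$, we have $I\times A\le 50$, and this bound is attained at $I=10$, $A=5$. By the definition of $\mathcal{T}$, $\mathcal{T}\le T_{\max}=20$, and this is attained, for instance, by four class-4 tools. Because $\alpha=0.6\in(0,1)$, both coefficients are positive, so $R$ is monotone in each argument. Hence $R_{\max}=0.6\cdot 50+0.4\cdot 20=38.0$, and this maximum is achieved.

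Next I would evaluate the thresholds: $0.75\cdot 38=28.5$ and $0.40\cdot 38=15.2$. I would also record the minimum, $R_{\min}=0.6\cdot 1+0.4\cdot 0=0.6$ (an agent with no tools). This gives $R\in[0.6,38]$. The intervals $[28.5,38]$, $[15.2,28.5)$ and $[0.6,15.2)$ are disjoint and cover $[0.6,38]$, since $15.2<28.5$. So $\mathrm{Tier}(R)$ is well defined and total on the attainable range. All three tiers are nonempty: $R_{\max}$ lies in Tier~1, $R_{\min}$ in Tier~3, and, for example, $I\times A=30$ with $\mathcal{T}=0$ gives $R=18$, which lies in Tier~2.

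The only real content, and the place I would be careful, is making sure the "boundaries" are meaningful given the discrete inputs. $I\times A$ takes integer values and $\mathcal{T}$ takes multiples of $0.5$ (up to the cap), so $R$ lives on a grid of step $0.2$. I would check whether $R$ can hit exactly $28.5$ or $15.2$, so that the closed/open convention matters. For example, $I\times A=40$ and $\mathcal{T}=11.25$ is not attainable, but $I\times A=35$ and $\mathcal{T}=18.75$ is also off-grid. Since $0.6k+0.2m$ produces values on a $0.1$ grid, I would simply remark that ties are resolved upward by the stated $\ge$ convention.

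If the authors intend the proposition to also assert something structural, such as that tool breadth alone cannot reach Tier~1, I would note the bound $0.4\cdot 20=8<28.5$ as an immediate corollary. Even with $I\times A=1$, the maximum is $0.6+8=8.6<15.2$, so tools alone cannot even reach Tier~2. I expect the statement itself to require only this arithmetic.
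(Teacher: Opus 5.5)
Your proposal is correct and follows the paper's implicit argument, which simply computes $R_{\max}=0.6\cdot 50+0.4\cdot 20=38$ and reads off $0.75R_{\max}=28.5$ and $0.40R_{\max}=15.2$, leaving the tool-term bound $0.4\cdot 20=8<15.2$ to the following remark; your attainment, partition, and nonemptiness checks are sound additions. One tidy-up: your own step-$0.2$ observation, $R=0.2\,(3IA+2\mathcal{T})$ with $2\mathcal{T}$ an integer, already settles the tie question---$28.5$ is never attained while $15.2$ is (e.g.\ $I=5$, $A=4$, $\mathcal{T}=8$)---so the later ``$0.1$ grid'' sentence is unnecessary and inconsistent with it.
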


\begin{remark}
An agent with $I=10$, $A=5$ satisfies $R \geq 0.6 \times 50 = 30 > 28.5$
and is automatically Tier~1 regardless of toolset. Conversely, the tool term
is bounded: its maximum contribution is $(1-\alpha)\,T_{\max} = 0.4 \times 20
= 8.0$, strictly below the Tier~2 cutoff of $15.2$, so \emph{tool breadth
alone can never lift an agent above Tier~3}. For instance AG-17, a read-only
agent with $A=1$, $I=2$ and 60 Class-1 tools, has raw tool score
$60 \times 0.5 = 30$ capped to $\mathcal{T} = \min(30,20) = 20$, giving
$R = 0.6 \times 2 + 0.4 \times 20 = 9.2$---correctly Tier~3 despite carrying
the largest toolset in the inventory (a naive additive score
$I{\times}A + \sum_{s} w_s n_s = 2 + 30 = 32$ would wrongly rank it near the
top tier).

Critically, tier assignment and \emph{runtime quarantine} are decoupled
in our framework. A Tier~1 agent with a high inherent risk score is not
automatically quarantined; it is quarantined only when upstream failures
propagate to it (Section~\ref{sec:dag}). Conflating the two would render
all high-tier agents inoperative by design.
\end{remark}

\subsection{Agent Inventory: Inherent Risk Scores}

Table~\ref{tab:risk_scores} reports the inherent risk scores for the
17-agent inventory used in our simulation study (Section~\ref{sec:simulation}).
Agents are grouped by their computed tier. All scores are reproducible from
the companion Python script with \texttt{SEED=42}.

\begin{table}[h]
\centering
\caption{Inherent Risk Scores for the 17-Agent Simulation Inventory ($\alpha=0.6$, $T_{\max}=20$)}
\label{tab:risk_scores}
\small
\begin{tabular}{@{}llccccl@{}}
\toprule
Agent ID & Tier & $I$ & $A$ & $\mathcal{T}$ & $R$ & Description \\
\midrule
AG-01 & 1 & 10 & 5 & 13.0 & 35.20 & Trade execution orchestrator \\
AG-02 & 1 & 10 & 5 &  9.5 & 33.80 & Credit decisioning engine \\
AG-03 & 1 & 10 & 4 & 12.5 & 29.00 & Capital reporter \\
AG-04 & 1 &  9 & 5 & 12.0 & 31.80 & Portfolio risk aggregator \\
\midrule
AG-05 & 2 &  7 & 4 &  8.0 & 20.00 & Position limit monitor \\
AG-06 & 2 &  6 & 4 &  9.0 & 18.00 & Market data normaliser \\
AG-07 & 2 &  8 & 3 &  9.5 & 18.20 & Counterparty exposure calc \\
AG-08 & 2 &  7 & 3 &  8.0 & 15.80 & Collateral valuation agent \\
AG-09 & 2 &  7 & 3 &  9.0 & 16.20 & Document classification \\
AG-10 & 2 &  7 & 3 &  9.5 & 16.40 & Limit-breach alerting \\
\midrule
AG-11 & 3 &  2 & 2 &  2.0 &  3.20 & Internal data retriever A \\
AG-12 & 3 &  2 & 1 &  1.5 &  1.80 & Internal data retriever B \\
AG-13 & 3 &  3 & 2 &  3.5 &  5.00 & Reference data lookup \\
AG-14 & 3 &  2 & 2 &  2.0 &  3.20 & Audit log reader \\
AG-15 & 3 &  3 & 2 &  3.0 &  4.80 & Reporting formatter \\
AG-16 & 3 &  2 & 1 &  1.0 &  1.60 & Status dashboard feeder \\
AG-17 & 3 &  2 & 1 & 20.0 &  9.20 & Enterprise log/document indexer\,$^{\dagger}$ \\
\bottomrule
\end{tabular}

{\footnotesize $^{\dagger}$AG-17 carries 60 Class-1 (read-only) tools; its raw
tool score $60\times0.5=30$ is capped to $T_{\max}=20$. It is the wide-but-shallow
control case that exercises the cap and blend (Section~\ref{sec:simulation}).}
\end{table}

% ─────────────────────────────────────────────────────────────────────────────
\section{Graph-Theoretic Modeling of Agent Interdependence}
\label{sec:dag}
% ─────────────────────────────────────────────────────────────────────────────

\subsection{The Agent Dependency Graph}

\begin{definition}[Agent Dependency Graph]
The agent inventory is modeled as a directed graph $G = (V, E)$ where:
\begin{itemize}[leftmargin=2em]
    \item $V = \{v_1, \ldots, v_n\}$ is the set of registered agents, each
    with associated metadata $(R_i, \mathrm{Tier}_i, \mathrm{Status}_i)$.
    \item $E \subseteq V \times V$, where $(v_i, v_j) \in E$ denotes that
    $v_j$ consumes the output of $v_i$ or invokes $v_i$ as a tool.
\end{itemize}
\end{definition}

$G$ is required to be acyclic (a DAG) for topological execution order to be
well-defined. Section~\ref{sec:cycles} addresses violations of this
assumption.

\subsection{Composite Risk Propagation}

The central insight of the DAG approach is that validation failures are not
local events. If agent $v_i$ fails its behavioral benchmark, every downstream
agent consuming $v_i$'s output is operating on potentially corrupted data.

\begin{definition}[Ancestor and Descendant Sets]
For agent $v_j \in V$:
\begin{align}
\mathcal{A}(v_j) &= \{v_i \in V : \exists \text{ directed path from } v_i \text{ to } v_j\} \\
\mathcal{D}(v_i) &= \{v_j \in V : \exists \text{ directed path from } v_i \text{ to } v_j\}
\end{align}
\end{definition}

\begin{definition}[Composite Risk Score]
\begin{equation}
R^C_j = R_j
  + \beta_F \cdot \bigl|\{v_i \in \mathcal{A}(v_j) : \mathrm{Status}_i = \mathtt{Failed}\}\bigr|
  + \beta_B \cdot \bigl|\{v_i \in \mathcal{A}(v_j) : \mathrm{Status}_i = \mathtt{Blocked}\}\bigr|
\label{eq:composite_risk}
\end{equation}
where $\beta_F > 0$ is the failed-ancestor penalty and $\beta_B > 0$ is the
blocked-ancestor penalty, with $\beta_F > \beta_B$. We set $\beta_F = 15.0$
and $\beta_B = 5.0$ throughout.
\end{definition}

\subsection{The Blocking Condition}
\label{subsec:blocking}

An important design choice is how to translate the propagation state into a
binary \texttt{Blocked} / \texttt{Validated} decision. Two properties are
required. First, the decision must be decoupled from an agent's \emph{own}
inherent risk $R_j$: a naive threshold on the absolute composite score,
$R^C_j \geq \tau$, would auto-quarantine high-tier agents even with no upstream
failure. Second, it should not hinge on the specific numeric values of the
penalties $\beta_F,\beta_B$, which are calibration choices for \emph{severity
reporting} and should not silently determine \emph{which} agents are
quarantined.

We therefore define blocking directly by \emph{contamination reachability}:
\begin{equation}
\mathrm{Status}_j = \mathtt{Blocked}
\iff \exists\, v_i \in \mathcal{A}(v_j) \ \text{with}\
\mathrm{Status}_i \in \{\mathtt{Failed}, \mathtt{Blocked}\},
\label{eq:blocking_condition}
\end{equation}
i.e.\ an agent (that does not itself fail) is quarantined exactly when its
ancestor set contains at least one \emph{contaminated} (Failed or Blocked)
agent. The composite score $R^C_j$ of Eq.~\eqref{eq:composite_risk} is retained
separately as a \emph{severity magnitude}---it quantifies how deeply an agent
is embedded in the cascade via its count of failed and blocked ancestors---but
it no longer gates the Blocked/Validated decision.

\begin{proposition}[Blast radius equals reachability from a failure]
\label{prop:blast}
Under Eq.~\eqref{eq:blocking_condition}, evaluated in any topological order, an
agent $v_j$ that passes its own behavioral test is \texttt{Blocked} if and only
if $\mathcal{A}(v_j)$ contains a \texttt{Failed} agent. Equivalently, the set of
Blocked agents is exactly the transitive descendant set $\mathcal{D}(v_f)$ of
the failed agent(s) $v_f$.
\end{proposition}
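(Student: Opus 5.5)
The plan is to argue by strong induction along a fixed topological order $v_{\pi(1)},\ldots,v_{\pi(n)}$ of $G$, which exists because $G$ is a DAG. The statuses are assigned in two layers. First, the behavioral test fixes the set $F \subseteq V$ of \texttt{Failed} agents. Then each remaining agent receives \texttt{Blocked} or \texttt{Validated} by Eq.~\eqref{eq:blocking_condition}, evaluated when that agent is reached in the order. This is well defined because every ancestor of $v_j$ precedes $v_j$ in any topological order, so the status of each $v_i \in \mathcal{A}(v_j)$ is already determined when $v_j$ is processed. I would also note at the outset that the resulting assignment does not depend on which topological order is chosen; this follows once the characterization is proved, since the characterization mentions only $F$ and reachability.

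The inductive claim is: for each non-failed $v_j$, we have $\mathrm{Status}_j = \mathtt{Blocked}$ if and only if $\mathcal{A}(v_j) \cap F \neq \emptyset$.

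For the ``if'' direction, no induction is needed. A failed ancestor is in particular contaminated, so Eq.~\eqref{eq:blocking_condition} fires directly.

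For the ``only if'' direction, suppose $v_j$ is Blocked. Then some $v_i \in \mathcal{A}(v_j)$ is Failed or Blocked. If $v_i$ is Failed, we are done. If $v_i$ is Blocked, then $v_i$ precedes $v_j$, so the induction hypothesis applies to $v_i$ and gives a failed $v_f \in \mathcal{A}(v_i)$. Transitivity of reachability (concatenate the path from $v_f$ to $v_i$ with the path from $v_i$ to $v_j$) then gives $v_f \in \mathcal{A}(v_j)$. The base case is the first non-failed agent in the order: every contaminated ancestor it has must be Failed, because no Blocked agent precedes it.

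For the ``equivalently'' clause, I would rewrite the condition using $v_f \in \mathcal{A}(v_j) \iff v_j \in \mathcal{D}(v_f)$. This shows that the Blocked set equals $\bigl(\bigcup_{v_f \in F} \mathcal{D}(v_f)\bigr) \setminus F$.

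The main obstacle is stating this clause precisely rather than proving it. The proposition as written says the Blocked set is ``exactly'' $\mathcal{D}(v_f)$. That is literally true only after removing the failed agents themselves, and removing them matters whenever one failed agent is a descendant of another. I would state the set identity with $\setminus F$ made explicit and remark that, in the single-failure case, it reduces to $\mathcal{D}(v_f)$, since $v_f \notin \mathcal{D}(v_f)$ by acyclicity. A secondary point is to stress that the argument never uses $R_j$, $\beta_F$, or $\beta_B$. This is exactly the decoupling property claimed in Section~\ref{subsec:blocking}.
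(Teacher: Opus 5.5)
Your proposal is correct and follows the paper's argument: the paper's proof follows contaminated ancestors strictly backward through the topological order until the walk must stop at a \texttt{Failed} agent, and your strong induction along that order is the same argument, with the ``if'' direction and the transitivity step identical. Your explicit treatment of the ``equivalently'' clause goes beyond the paper, which asserts that clause without proof. The identification of the Blocked set as $\bigl(\bigcup_{v_f\in F}\mathcal{D}(v_f)\bigr)\setminus F$, which reduces to $\mathcal{D}(v_f)$ for a single failure by acyclicity, is a correct sharpening.
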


\begin{proof}
($\Leftarrow$) If some Failed $v_f \in \mathcal{A}(v_j)$ then $v_j$ has a
contaminated ancestor and, not having failed itself, is Blocked.
($\Rightarrow$) Suppose $v_j$ is Blocked. Then it has a contaminated ancestor
$v_a$; if $v_a$ is Failed we are done. Otherwise $v_a$ is Blocked, so by
Eq.~\eqref{eq:blocking_condition} $v_a$ has a contaminated ancestor strictly
earlier in the topological order. Iterating, this walk moves strictly backward
in a finite acyclic graph without revisiting a node; it cannot terminate at a
Blocked agent (each has a contaminated ancestor), so it terminates at a Failed
agent $v_f$. Since every step follows an ancestor edge, transitivity gives
$v_f \in \mathcal{A}(v_j)$.
\end{proof}

This is precisely the containment guarantee one wants---the blast radius of a
failure is exactly its transitive descendant set---but, unlike a threshold on
$\Delta R^C_j$, it holds \emph{by construction} rather than as a numerical
coincidence. A rule $\Delta R^C_j \geq \beta_F$ would additionally block any
agent that reaches $\beta_F$ from blocked ancestors alone (with $\beta_B=5$,
three blocked ancestors already sum to $\beta_F=15$), so its equivalence to
``one failed ancestor'' is not robust to the choice of penalties;
Eq.~\eqref{eq:blocking_condition} removes that dependence.

\begin{remark}
Eq.~\eqref{eq:blocking_condition} cleanly separates two distinct concepts:
\emph{tier} (a static property of the agent's design, measuring its potential
harm) and \emph{runtime status} (a dynamic property measuring its current
trustworthiness given the state of its upstream dependencies). Conflating them
by using an absolute threshold on $R^C_j$ would make Tier~1 agents ungovernable.
\end{remark}

\subsection{The Two-Pass Execution Structure}

We compute validation and propagation in two passes
(Algorithm~\ref{alg:twopass}): Pass~1 runs every agent's behavioral test and
records a tentative \texttt{Failed}/\texttt{Validated}$^*$ label, and Pass~2
then computes each agent's composite score and finalizes its status. We
emphasize that this structure is an \emph{implementation convenience, not a
correctness requirement}. Because an agent's composite score and blocking
decision depend only on its \emph{ancestors}
(Eqs.~\eqref{eq:composite_risk}--\eqref{eq:blocking_condition}), and a
topological order visits every ancestor before the agent itself, a single
interleaved pass already produces correct scores and statuses: when an agent is
visited, all of its ancestors---and hence their final
\texttt{Failed}/\texttt{Blocked} labels---are resolved. (Agents at the same
topological depth are mutually incomparable and never lie in one another's
ancestor sets, so one's failure never needs to be observed by the other.)
Separating validation from scoring is nonetheless useful in practice: the
behavioral tests are the expensive, potentially externally executed or
parallelized step, so staging them ahead of the cheap graph propagation keeps
the two concerns modular and makes propagation a pure function of the recorded
validation landscape.

\begin{algorithm}
\caption{Two-Pass Governed Workflow Execution}
\label{alg:twopass}
\begin{algorithmic}[1]
\Require DAG $G$, ValidationGuard $V_G$, severity penalties $\beta_F, \beta_B$
\State $\sigma \leftarrow$ \textsc{TopologicalSort}$(G)$
\State \Comment{Pass 1: Validation only}
\For{$v \in \sigma$}
    \If{$V_G$.\textsc{RunBehavioralTest}$(v)$ $=$ \textsc{False}}
        \State $\mathrm{Status}[v] \leftarrow \mathtt{Failed}$
    \Else
        \State $\mathrm{Status}[v] \leftarrow \mathtt{Validated}^*$ \Comment{tentative}
    \EndIf
\EndFor
\State \Comment{Pass 2: Composite risk scoring and final status}
\For{$v \in \sigma$}
    \State $R^C_v \leftarrow$ \textsc{CompositeRisk}$(v, G, \beta_F, \beta_B)$ \Comment{severity magnitude}
    \If{$\mathrm{Status}[v] \neq \mathtt{Failed}$ \textbf{and} $\mathcal{A}(v)$ has a \texttt{Failed}/\texttt{Blocked} agent}
        \State $\mathrm{Status}[v] \leftarrow \mathtt{Blocked}$
    \ElsIf{$\mathrm{Status}[v] = \mathtt{Validated}^*$}
        \State $\mathrm{Status}[v] \leftarrow \mathtt{Validated}$
    \EndIf
\EndFor
\Return $\{(\mathrm{Status}[v],\; R^C_v)\}_{v \in V}$
\end{algorithmic}
\end{algorithm}

Either way, because propagation reads only finalized ancestor labels, the
resulting statuses and composite scores are invariant to the particular (valid)
topological order chosen.

\subsection{Handling Quasi-Cyclic Architectures}
\label{sec:cycles}

Real agentic systems frequently violate the DAG assumption. Feedback agents,
self-correcting loops, and multi-turn orchestrators create directed cycles.
We handle these via \textbf{Loop Unrolling}: cycles are broken by
introducing time-indexed copies of the looping agents. Agent $v_i$ at
iteration $t$ is treated as a distinct node $v_i^{(t)}$, with edges
$(v_i^{(t)}, v_i^{(t+1)})$ representing the feedback path. A maximum
iteration depth $T_{\max}^{\mathrm{loop}}$ is enforced by the orchestration
layer and registered in the inventory; the unrolled graph remains a DAG
with $n \cdot T_{\max}^{\mathrm{loop}}$ nodes.

\subsection{Interdependence Reporting Snapshot}

At any timestamp, the IaC system generates an Interdependence Snapshot
(Table~\ref{tab:interdependence}) for MRM audit. The snapshot
is generated directly from the simulation study (Section~\ref{sec:simulation})
and shows a subset of the 17-agent inventory at the moment AG-01 fails its
behavioral test.

\begin{table}[h]
\centering
\caption{Agentic Model Interdependence Snapshot — Selected Agents
(Simulation seed=42; full inventory in Table~\ref{tab:simulation_results})}
\label{tab:interdependence}
\small
\begin{tabular}{@{}lllrrrl@{}}
\toprule
Agent ID & Tier & Upstream Deps & $R$ & $R^C$ & $\Delta R^C$ & Status \\
\midrule
AG-01 & 1 & None         & 35.20 & 35.20 & ---   & Failed    \\
AG-05 & 2 & AG-01        & 20.00 & 35.00 & +15.0 & Blocked   \\
AG-07 & 2 & AG-01        & 18.20 & 33.20 & +15.0 & Blocked   \\
AG-10 & 2 & AG-04, AG-05 & 16.40 & 36.40 & +20.0 & Blocked   \\
AG-15 & 3 & AG-09, AG-10 &  4.80 & 29.80 & +25.0 & Blocked   \\
AG-02 & 1 & None         & 33.80 & 33.80 & ---   & Validated \\
AG-06 & 2 & AG-02        & 18.00 & 18.00 & ---   & Validated \\
AG-12 & 3 & AG-06        &  1.80 &  1.80 & ---   & Validated \\
\bottomrule
\end{tabular}
\end{table}

The $\Delta R^C$ column surfaces the incremental penalty attributable to
upstream failures, enabling reviewers to immediately identify which agents
are at risk due to their graph position rather than their own validation
status.

% ─────────────────────────────────────────────────────────────────────────────
\section{Trajectory Monitoring: Formalizing the Golden Path}
\label{sec:trajectory}
% ─────────────────────────────────────────────────────────────────────────────

\subsection{The Inadequacy of Classical Drift Detection}

Standard model drift detection---Kolmogorov-Smirnov tests on output
distributions, Population Stability Index on input features---is
output-centric~\citep{gama2014, rabanser2019}. It asks: \emph{has the distribution of this model's outputs
changed?} For agentic systems, this question is necessary but insufficient.
Consider an agent whose output distribution is stable (e.g., it consistently
approves 73\% of credit applications) but whose \emph{reasoning path} has
shifted: it is now approving applications for different reasons in ways that
a new adversarial input distribution could manipulate. The output drift test
would not detect this. The Trajectory Monitoring protocol is designed to
detect it.

\subsection{Chain-of-Thought as an MRM Object}

We treat the agent's Chain-of-Thought (CoT)~\citep{wei2022}---the sequence of intermediate
reasoning steps generated before each action---as a first-class MRM 
artifact.

Recent work has shown that a model's stated CoT is not always a \emph{faithful}
account of the computation that produced its output: chain-of-thought
explanations can be systematically shaped by input features the model never
verbalizes, and measured faithfulness varies with model size and task
\citep{turpin2023, lanham2023}. This bears directly on what trajectory
monitoring can and cannot certify. Drift in the CoT embedding does \emph{not}
certify that an agent's hidden reasoning is correct or unchanged; it is a
monitorable \emph{proxy}---a necessary-but-not-sufficient signal whose value is
that a large, sustained shift in the distribution of \emph{stated} reasoning is
unlikely under stable, in-distribution operation and therefore warrants review.
We accordingly treat the CoT as an intermediate model artifact and a
pseudo-measure of the (un)explainability of agent behavior, complementary
to---not a replacement for---output-level drift testing and behavioral
validation.

For each invocation of agent $v_i$, the CoT is a sequence
$\mathbf{c} = (c_1, c_2, \ldots, c_k)$ of natural language steps. We
represent $\mathbf{c}$ as a vector in $\mathbb{R}^d$ via a frozen sentence
embedding model $\phi$:
\begin{equation}
\mathbf{e}(\mathbf{c}) = \phi\!\left(\bigoplus_{j=1}^{k} c_j\right) \in \mathbb{R}^d
\end{equation}
In the simulation we use $d = 128$, with embeddings L$_2$-normalized to the
unit sphere as is standard for sentence embedding models~\citep{reimers2019}.

\subsection{Constructing and Certifying the Golden Path}

\begin{definition}[Golden Path]
The Golden Path $\mathbf{g}_{v_i}$ for agent $v_i$ is the centroid of the
embedding distribution over a certified validation corpus of
$N_{\mathrm{val}}$ trajectories:
\begin{equation}
\mathbf{g}_{v_i} = \frac{1}{N_{\mathrm{val}}} \sum_{k=1}^{N_{\mathrm{val}}} \mathbf{e}(\mathbf{c}^{(k)})
\end{equation}
normalised to unit length.
\end{definition}

A Golden Path that is uncritically accepted from whatever validation runs
happen to produce risks certifying an unrepresentative or poorly-behaved
baseline, against which all future drift would be measured incorrectly. The
Golden Path must therefore satisfy four certification criteria before
registration, each targeting a distinct way the baseline could be
defective---incomplete task coverage, an internally inconsistent baseline,
too few samples to estimate the centroid reliably, and no human check that
the ``acceptable'' trajectories are in fact acceptable:

\begin{enumerate}[leftmargin=2em]
    \item \textbf{Coverage.} The prompt set must cover the agent's documented
    task distribution, including edge cases and adversarial inputs identified
    during development validation.
    \item \textbf{Intra-distributional consistency.}
    $\mathrm{tr}(\hat{\Sigma}_{\mathrm{val}}) \leq \sigma^2_{\max}$, where
    $\sigma^2_{\max}$ is a tier-specific variance ceiling.
    \item \textbf{Sample size.} $N_{\mathrm{val}} \geq 500$ (Tier~1),
    $\geq 200$ (Tier~2), $\geq 50$ (Tier~3).
    \item \textbf{Human review.} At least 10\% of validation CoTs must be
    manually annotated as ``acceptable reasoning'' by the validation team.
\end{enumerate}

\subsection{The Drift Metric and Permutation Calibration}
\label{subsec:driftmetric}

\begin{definition}[Trajectory Drift Metric]
Let $\mathbf{p}_{v_i}^{(t)}$ be the rolling centroid of the $N_{\mathrm{prod}}$
most recent production CoT embeddings. The Drift Metric at monitoring period
$t$ is:
\begin{equation}
D_t = 1 - \frac{\mathbf{g}_{v_i} \cdot \mathbf{p}_{v_i}^{(t)}}
                {\|\mathbf{g}_{v_i}\| \cdot \|\mathbf{p}_{v_i}^{(t)}\|}
\label{eq:drift}
\end{equation}
$D_t \in [0,2]$; $D_t = 0$ implies perfect alignment with the Golden Path.
\end{definition}

The drift threshold $\delta$ is calibrated by a \emph{matched bootstrap}~\citep{efron1993} that
reproduces the null distribution of the production statistic,
Eq.~\eqref{eq:drift}. That statistic compares an $N_{\mathrm{val}}$-sample
Golden Path centroid against an $N_{\mathrm{prod}}$-sample rolling window, so
under the null (in-distribution production) its variance is governed by
\emph{both} sample sizes, scaling as
$\tfrac{1}{N_{\mathrm{val}}} + \tfrac{1}{N_{\mathrm{prod}}}$. We therefore, per
replicate, resample a Golden Path at $N_{\mathrm{val}}$ \emph{and} a window at
$N_{\mathrm{prod}}$ from the certified validation corpus:

\begin{enumerate}[leftmargin=2em]
    \item Draw $N_{\mathrm{val}}$ indices with replacement from the validation
    corpus and form the unit-normalised centroid $\mathbf{g}^\star$.
    \item Independently draw $N_{\mathrm{prod}}$ indices with replacement and
    form the unit-normalised centroid $\mathbf{p}^\star$.
    \item Record $D(\mathbf{g}^\star, \mathbf{p}^\star)$.
    \item Repeat across $M = 1{,}000$ replicates to obtain the empirical null
    $\hat{F}_D$, and set $\delta = \hat{F}_D^{-1}(1 - \alpha_{\mathrm{fp}})$ for
    a false-positive target $\alpha_{\mathrm{fp}}$.
\end{enumerate}

Matching both sample sizes is essential. A naive null that splits
$N_{\mathrm{val}}$ into two equal halves compares two
$N_{\mathrm{val}}/2$-sample centroids, which does \emph{not} reproduce the
production statistic: when $N_{\mathrm{prod}} < N_{\mathrm{val}}$ the production
window is noisier than either half, so the half-split null understates the
production variance and yields a threshold that is far too tight. The effect is
severe at Tier~1, where $N_{\mathrm{val}} = 500 \gg N_{\mathrm{prod}} = 100$:
under the half-split null a perfectly in-distribution agent exceeds the nominal
$5\%$ threshold roughly $90\%$ of the time (Section~\ref{sec:simulation}). The
matched bootstrap removes this mismatch and restores false-positive control at
the intended $\alpha_{\mathrm{fp}}$; because it resamples with replacement it
also applies unchanged when $N_{\mathrm{val}} < N_{\mathrm{prod}}$ (as at
Tier~3).

We use $\alpha_{\mathrm{fp}} = 0.05$ (Tier~1), $0.01$ (Tier~2), and $0.10$
(Tier~3) as calibration defaults (Appendix~\ref{app:calibration}). Under the
matched calibration $\alpha_{\mathrm{fp}}$ is a pure false-positive
budget---the fraction of in-distribution monitoring windows expected to
soft-flag---set per tier to trade alert volume against detection sensitivity,
independent of $N_{\mathrm{val}}$.

\subsection{Distinguishing Legitimate Deviation from Unintended Drift}

A Drift Metric exceedance $D_t > \delta$ is necessary but not sufficient for
remedial action. $\delta$ is calibrated to a false-positive \emph{budget},
not a certainty threshold (Section~\ref{subsec:driftmetric}), so by
construction some exceedances are expected under entirely benign operation.
A single hard cutoff that immediately quarantines on any exceedance would
therefore misspend that budget in one of two directions: set $\delta$ tight
enough to catch genuine drift quickly and the agent inventory pays for it in
false quarantines of benign reasoning variation (e.g., a legitimately novel
but correct line of reasoning on an unusual input), each of which forces a
full Validation Gate to reverse; set $\delta$ loose enough to keep false
quarantines rare and detection of genuine drift is delayed. A single
threshold cannot simultaneously minimize both costs because it collapses two
different decisions---\emph{worth a human look} versus \emph{worth taking the
agent offline}---into one. We address this by decoupling the two decisions
into a two-stage response, so that the cheaper, reversible action (human
review) absorbs the bulk of the false-positive budget and the expensive,
disruptive action (quarantine) is reserved for exceedances that are either
large enough to be implausible under the null or persistent enough that
continued operation is no longer justified while awaiting review:

\textbf{Stage 1: Soft Flag.} When $D_t \in (\delta,\; 1.5\delta]$, the agent
status transitions to \texttt{Under Review}. This does \emph{not} trigger
downstream quarantine. A human validator reviews a random sample of recent
CoTs against the registered task scope.

\textbf{Stage 2: Hard Flag.} When $D_t > 1.5\delta$ or a Soft Flag persists
beyond $\tau_{\mathrm{review}}$ days, status transitions to
\texttt{Quarantined} and the downstream propagation algorithm runs. The agent
requires a full Validation Gate before reinstatement.

Under the matched calibration above, this two-stage rule cleanly separates the
two populations in the simulation study (Section~\ref{sec:simulation}): every
in-distribution validated agent remains \texttt{Within Bounds} while every agent
subject to a genuine drift injection is \texttt{Hard-Flagged}. Any residual Soft
Flags occur only at the per-tier $\alpha_{\mathrm{fp}}$ budget---confirming that
the threshold now controls the intended error rate, in contrast to the
$\sim$90\% false-positive rate produced by a sample-size-mismatched null.

\subsection{The LLM Version Change Problem}

When an LLM provider releases a new model version, the embedding geometry of
the agent's CoT trajectories may shift, rendering the Golden Path meaningless.
This is a direct consequence of the upstream-change cause identified in
Section~\ref{sec:background}: the provider's release is not an institutional
change event, so nothing in traditional change control would otherwise catch
it, yet it invalidates the very baseline the trajectory monitor depends on.
Treating it as a silent non-event would let the monitor keep comparing
production trajectories against a baseline computed under a superseded model,
producing either spurious drift alarms or, worse, a false sense of coverage.
We mandate instead: (1) the affected agent is immediately suspended and transitioned
to \texttt{Pending Revalidation}; (2) the Golden Path is invalidated and the
production trajectory buffer cleared; (3) a new Validation Gate run constructs
a new Golden Path and recomputes $\delta$; (4) the agent is reinstated only
upon successful Gate completion. No waiver is permitted for Tier~1 or
Tier~2 agents.

% ─────────────────────────────────────────────────────────────────────────────
\section{Practical Implementation Considerations}
\label{sec:practical}
% ─────────────────────────────────────────────────────────────────────────────

\subsection{Telemetry and Logging Architecture}

Every agent invocation must emit a structured telemetry event containing: the
UMI, invocation timestamp, input hash, CoT transcript, action taken, tools
invoked and their return values, and the output. This telemetry must be
written to an append-only, tamper-evident log accessible to the MRM function
independently of the business line, and retained for the institution's
specified model risk retention period (typically 7 years for typical
capital based models).

\subsection{Prompt Change Control}

Prompt engineering changes to an agent's system prompt constitute a model
change event and must trigger the Validation Gate. We recommend a
\textbf{Prompt Registry} in which system prompts are version-controlled,
each version is assigned a hash, and the currently-registered hash is
compared against the runtime hash at each invocation. A hash mismatch
generates an immediate alert and operationally suspends the agent pending
change-control review.

\subsection{Integration with Existing MRM Tooling}

The IaC system's CSV export maps directly to standard MRM inventory fields, with
the addition of the $R^C$ and $\Delta R^C$ columns. Status changes should
push automatically to the institution's model inventory database.

% ─────────────────────────────────────────────────────────────────────────────
\section{Simulation Study}
\label{sec:simulation}
% ─────────────────────────────────────────────────────────────────────────────

\subsection{Setup}

All results in this section are simulation based and require 
no external LLM inference. Embeddings are synthetic unit-sphere
vectors drawn from a Gaussian centered on a per-agent direction, simulating
the structure of real sentence embeddings.

The 17-agent inventory follows the structure of Table~\ref{tab:risk_scores}:
four Tier~1 root nodes (AG-01 through AG-04), six Tier~2 mid-layer agents
(AG-05 through AG-10), and seven Tier~3 agents (AG-11 through AG-17)---the last
of which, AG-17, is the wide-but-shallow control case (Section~\ref{sec:doa}).
The dependency graph contains 15 edges encoding the data-flow architecture
described in Section~\ref{sec:dag} (AG-17 is a standalone read-only node with no
dependencies). The graph is confirmed to be a DAG.

AG-01 is forced to fail its behavioral test by the \texttt{ValidationGuard},
simulating a Tier~1 root-node failure. All other agents pass. The two-pass
workflow (Algorithm~\ref{alg:twopass}) is then run with $\beta_F = 15.0$,
$\beta_B = 5.0$.

\subsection{Composite Risk Propagation Results}

Table~\ref{tab:simulation_results} reports the full two-pass output for all
17 agents, sorted by tier and then by agent ID within tier.

The \textbf{Circuit-Breaker} (CB) column reports the breaker state derived from
each agent's propagation result, in the spirit of the circuit-breaker pattern in
distributed systems~\citep{nygard2018}. The failed agent that trips the breaker is \texttt{Active};
a blocked agent with a directly \texttt{Failed} parent is \texttt{Triggered}
(first-hop containment); a blocked agent contaminated only through an
intermediate \texttt{Blocked} parent is \texttt{Quarantined} (deeper
containment); and a validated agent is on \texttt{Standby}. The
Triggered/Quarantined distinction records how far a blocked agent sits from the
originating failure, giving responders a natural triage order.

\begin{table}[h]
\centering
\caption{Two-Pass Workflow Results: 17-Agent Inventory under AG-01 Failure
(Seed=42; $\beta_F=15.0$, $\beta_B=5.0$)}
\label{tab:simulation_results}
\small
\begin{tabular}{@{}lllrrrlll@{}}
\toprule
Agent ID & Tier & Upstream Deps & $R$ & $R^C$ & $\Delta R^C$ & Status & CB & Traj.\ Flag \\
\midrule
AG-01 & 1 & None         & 35.20 & 35.20 & ---   & Failed    & Active      & Hard Flag \\
AG-02 & 1 & None         & 33.80 & 33.80 & ---   & Validated & Standby     & Within Bounds \\
AG-03 & 1 & None         & 29.00 & 29.00 & ---   & Validated & Standby     & Within Bounds \\
AG-04 & 1 & None         & 31.80 & 31.80 & ---   & Validated & Standby     & Within Bounds \\
\midrule
AG-05 & 2 & AG-01        & 20.00 & 35.00 & +15.0 & Blocked   & Triggered   & Hard Flag \\
AG-06 & 2 & AG-02        & 18.00 & 18.00 & ---   & Validated & Standby     & Within Bounds \\
AG-07 & 2 & AG-01        & 18.20 & 33.20 & +15.0 & Blocked   & Triggered   & Hard Flag \\
AG-08 & 2 & AG-02, AG-04 & 15.80 & 15.80 & ---   & Validated & Standby     & Within Bounds \\
AG-09 & 2 & AG-03        & 16.20 & 16.20 & ---   & Validated & Standby     & Within Bounds \\
AG-10 & 2 & AG-04, AG-05 & 16.40 & 36.40 & +20.0 & Blocked   & Quarantined & Hard Flag \\
\midrule
AG-11 & 3 & AG-05        &  3.20 & 23.20 & +20.0 & Blocked   & Quarantined & Hard Flag \\
AG-12 & 3 & AG-06        &  1.80 &  1.80 & ---   & Validated & Standby     & Within Bounds \\
AG-13 & 3 & AG-07        &  5.00 & 25.00 & +20.0 & Blocked   & Quarantined & Hard Flag \\
AG-14 & 3 & AG-08        &  3.20 &  3.20 & ---   & Validated & Standby     & Within Bounds \\
AG-15 & 3 & AG-09, AG-10 &  4.80 & 29.80 & +25.0 & Blocked   & Quarantined & Hard Flag \\
AG-16 & 3 & AG-09        &  1.60 &  1.60 & ---   & Validated & Standby     & Within Bounds \\
AG-17 & 3 & None         &  9.20 &  9.20 & ---   & Validated & Standby     & Within Bounds \\
\midrule
\multicolumn{9}{l}{\textbf{Summary:} 10 Validated, 6 Blocked, 1 Failed} \\
\bottomrule
\end{tabular}
\end{table}

\subsection{Key Findings}

\textbf{1. Proportional containment.} The single failure of AG-01 (1 of 4
Tier~1 roots) cascades to block exactly 6 agents: its two direct Tier~2
dependents (AG-05, AG-07), one second-hop Tier~2 agent (AG-10), two Tier~3
agents directly downstream of blocked Tier~2 nodes (AG-11, AG-13), and one
Tier~3 agent with mixed parentage (AG-15). The remaining 10 agents---including
the three other Tier~1 root nodes and the standalone AG-17---remain fully
operational. The blast radius is precisely the transitive descendant set
$\mathcal{D}(\text{AG-01})$, exactly as guaranteed by
Proposition~\ref{prop:blast}.

\textbf{2. Transitive blocking through validated intermediate nodes.} AG-10
is blocked despite having AG-04 (Validated) as one of its direct parents.
It is blocked because its ancestor set also contains AG-01 (via AG-05). This
demonstrates that the reachability rule correctly identifies the \emph{full}
contamination path, not merely the immediate parent status.
Similarly, AG-15 is blocked via the AG-09 $\to$ AG-10 path even though
AG-09 itself is validated.

\textbf{3. Penalty accumulation.} AG-10 accumulates $\Delta R^C = +20.0$
(one failed ancestor AG-01 contributing $\beta_F = 15$ and one blocked
ancestor AG-05 contributing $\beta_B = 5$), while AG-15 accumulates
$\Delta R^C = +25.0$ (AG-01 contributing 15, AG-05 and AG-10 each
contributing 5). This penalty accumulation gives MRM teams a quantitative
measure of how deeply embedded in the failure cascade a given agent is (Section~\ref{subsec:blocking}); the
composite score serves purely as this severity signal and no longer gates
quarantine.

\textbf{4. Tier independence of quarantine status.} All 6 blocked agents
span multiple tiers (Tier~2: AG-05, AG-07, AG-10; Tier~3: AG-11, AG-13,
AG-15), confirming that blocking is determined entirely by graph position
relative to the failure, not by the agent's inherent risk tier.

\begin{figure}[H]
\centering
\includegraphics[width=1.2\textwidth]{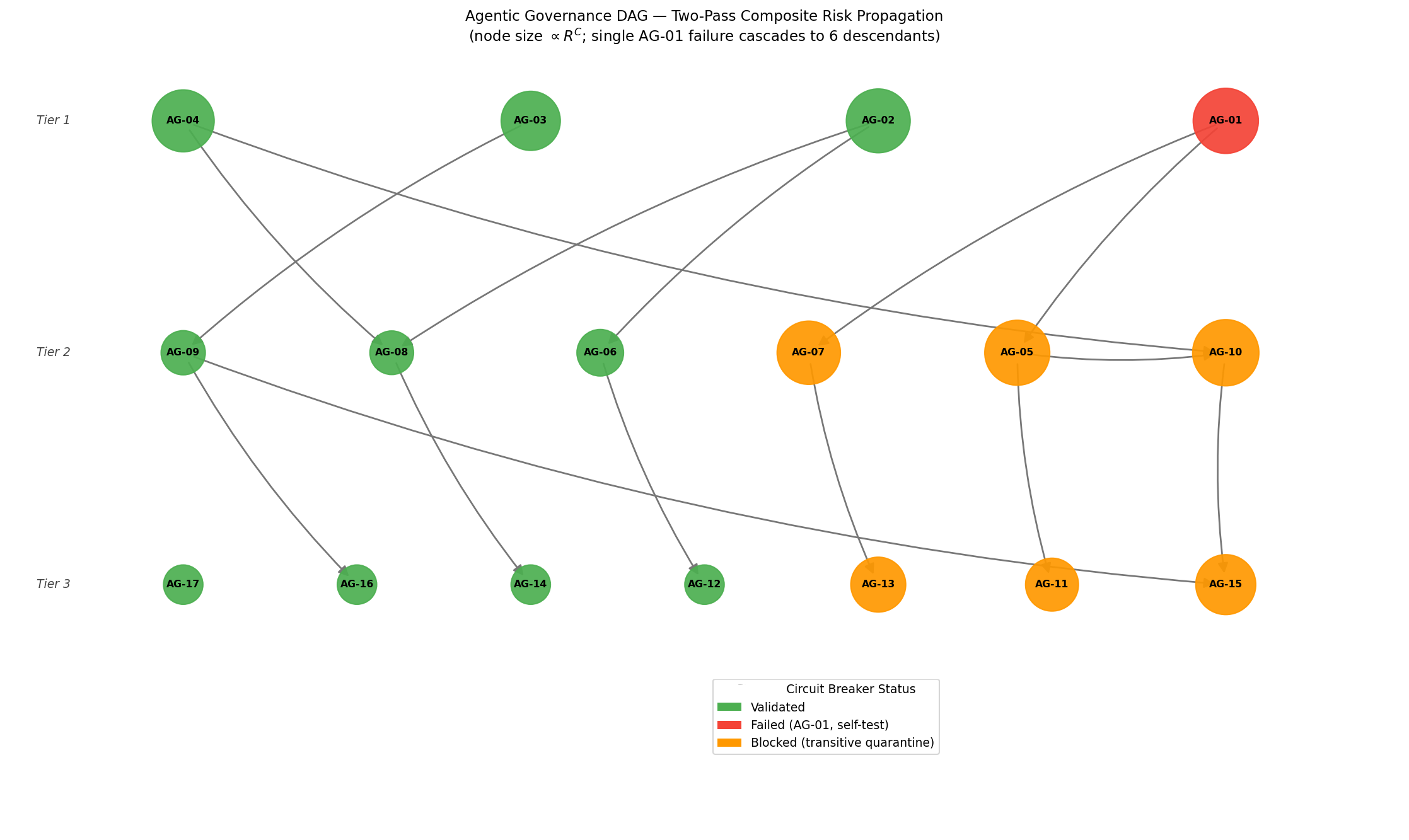}
\caption{Governance map simulation results: The governance DAG topology of agent interconnectedness and tiering. The figure shows the cascading of failures to descendent nodes and overall risk propagation.}
\label{fig:governance_map}
\end{figure}

\subsection{Trajectory Monitoring Results}

Drift metrics are computed with $N_{\mathrm{prod}} = 100$ production
invocations and $M = 1{,}000$ bootstrap replicates for the matched threshold
calibration of Section~\ref{sec:trajectory} (Appendix~\ref{app:calibration}).
Each agent's stochastic trajectory uses an independent, agent-keyed random
generator, so the results are invariant to processing order and to the presence
of other agents. With the matched calibration the monitor separates the two
populations cleanly.

\textbf{Validated agents.} All ten in-distribution validated agents are
classified \texttt{Within Bounds}, with drift ratios $D_t/\delta$ between
roughly $0.6$ and $0.9$. Crucially, the three surviving Tier~1 validated agents
(AG-02, AG-03, AG-04) now sit \emph{below} threshold at $D_t/\delta = 0.703$,
$0.829$, and $0.743$---in contrast to the half-split null, under which the same
agents soft-flag at ratios above $1.2$ (Section~\ref{sec:trajectory}). The
per-tier thresholds are $\delta \approx 0.020$--$0.022$ (Tier~1),
$0.028$--$0.031$ (Tier~2), and $0.045$--$0.054$ (Tier~3). AG-17, the
wide-but-shallow control agent, is likewise \texttt{Within Bounds}
($D_t/\delta = 0.883$).

\textbf{Failed and Blocked agents.} All seven agents subject to the drift
injection---the failed root AG-01 and its six blocked descendants (AG-05,
AG-07, AG-10, AG-11, AG-13, AG-15)---receive \texttt{Hard Flag} status, with
$D_t/\delta$ ratios from $3.3$ to $8.3$. In particular AG-13, which the
mis-calibrated half-split null had under-flagged as a mere Soft Flag, is now
correctly Hard-Flagged: the matched Tier~3 threshold ($\delta \approx 0.052$,
rather than the inflated $0.126$) no longer absorbs its drift.

Overall the monitor yields a clean confusion matrix on this inventory---$10$
Within Bounds, $0$ Soft, $7$ Hard---with all in-distribution agents passing and
all drifted agents caught. Residual soft false positives are expected only at
the per-tier $\alpha_{\mathrm{fp}}$ budget; this seed realizes none. The drift
injection magnitude of $0.60$ (orthogonal component) corresponds to a
substantial angular displacement on the unit sphere.

\begin{figure}[H]
\centering
\includegraphics[width=1.1\textwidth]{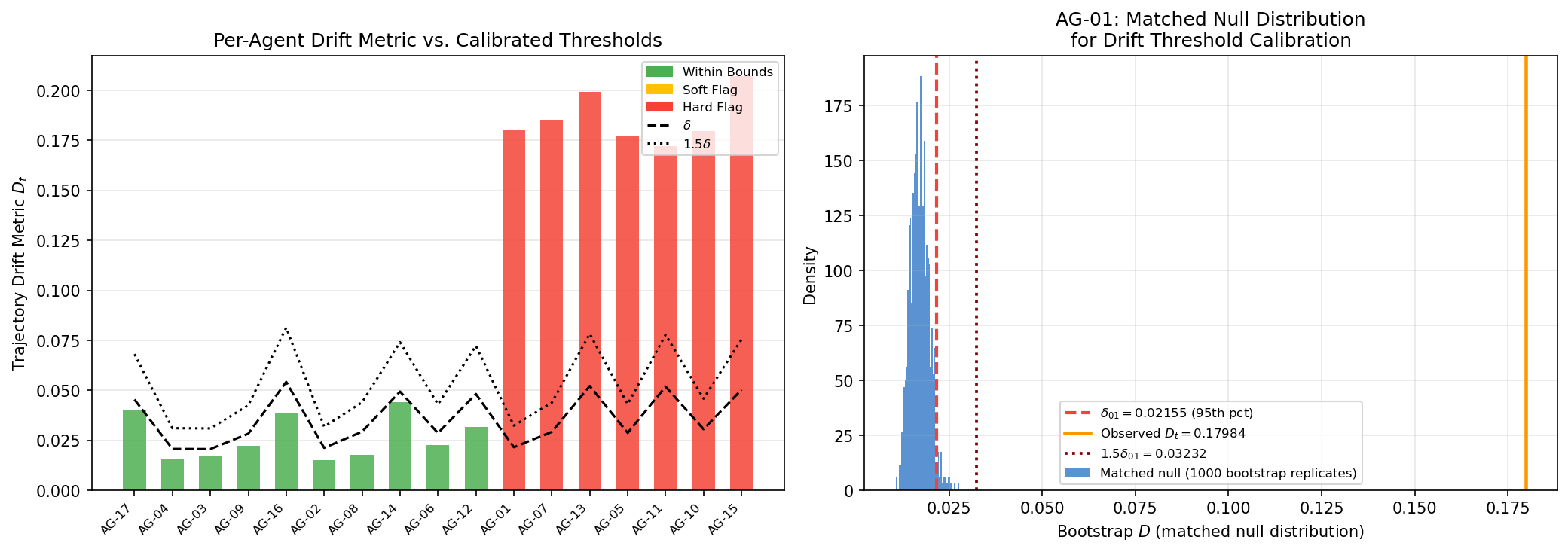}
\caption{Drift analysis simulation results: The figure shows per-agent trajectory drift, color coded by whether it is within bounds or soft/hard flagged (left panel). The right panel shows the case of agent AG-01 and its null distribution for drift threshold calibration.}
\label{fig:drift}
\end{figure}

% ─────────────────────────────────────────────────────────────────────────────
\section{Open Problems and Conclusion}
\label{sec:conclusion}
% ─────────────────────────────────────────────────────────────────────────────

\subsection{Open Problems}

\textbf{Embedding model stability.} The Trajectory Monitoring protocol
assumes the sentence embedding model $\phi$ is stable over time. If $\phi$
is updated, the Golden Path is invalidated by the same mechanism as a base
model update. Institutions should pin embedding model versions and treat
updates as a governance event requiring batch recomputation of all registered
Golden Paths.

\textbf{Multi-modal agentic outputs.} The CoT embedding approach is defined
for text-based reasoning chains. Agents reasoning over structured data, code,
or images require analogous representations in appropriate embedding spaces
with potentially different similarity metrics.

\textbf{Adversarial trajectory manipulation.} A sophisticated adversary who
knows the Golden Path could craft inputs that preserve $D_t < \delta$ while
manipulating agent behavior in semantically-aligned but functionally harmful
ways. Defense against this class of attack requires adversarial trajectory
testing during validation.

\textbf{Calibration under non-i.i.d.\ production.} The matched bootstrap of
Section~\ref{sec:trajectory} controls the false-positive rate under the
assumption that in-distribution production windows are exchangeable with the
validation corpus. Real production streams may instead exhibit temporal
autocorrelation, slow covariate drift, or heavier-tailed variation than the
validation set, for which an i.i.d.\ bootstrap null can be optimistic.
Extending the calibration to block-bootstrap or otherwise time-aware nulls,
and studying the sensitivity--burden trade-off of $\alpha_{\mathrm{fp}}$ across
real task distributions, remains valuable future work.

\subsection{Conclusion}

The governance of agentic AI systems using the principles of traditional MRM is not merely an
extension of existing MRM practice. It requires a
fundamental architectural shift: from the model inventory as a passive ledger
to an active, real-time control system that participates in the agent's
operational lifecycle.

This paper has developed that shift across four dimensions. The calibrated
DoA materiality score addresses the dominance problem in naive risk
aggregation and provides an explicit, auditable taxonomy for tool access risk.
The DAG-based composite risk propagation algorithm---with its two-pass
execution and blocking condition correctly decoupled from inherent risk
tier---formalizes the intuition that validation failures are systemic events.
The Golden Path trajectory monitoring framework provides a statistically
principled approach to drift detection that explicitly distinguishes
legitimate reasoning variation from genuine behavioral departure. And the
practical protocols for LLM version changes, feedback loop handling, and
prompt change control close the implementation gaps that would otherwise
render a theoretically sound framework operationally unworkable.

The simulation study demonstrates that the framework behaves as intended:
a single Tier~1 failure propagates to exactly its transitive descendant
set, leaving the rest of the inventory operational, while the trajectory
monitoring provides a continuous, quantitative signal of reasoning fidelity
that traditional output-distribution tests cannot supply.

\newpage
\appendix

% ─────────────────────────────────────────────────────────────────────────────
\section{Notation Reference}
\label{app:notation}
% ─────────────────────────────────────────────────────────────────────────────

\begin{table}[H]
\centering
\caption{Summary of Mathematical Notation}
\begin{tabular}{@{}ll@{}}
\toprule
Symbol & Definition \\
\midrule
$G = (V, E)$                & Agent dependency graph (DAG) \\
$v_i$                       & Individual agent node \\
$R_i$                       & Inherent Risk Score (Eq.~\ref{eq:risk_score}) \\
$R^C_i$                     & Composite Risk Score (Eq.~\ref{eq:composite_risk}) \\
$\Delta R^C_i$              & Composite risk increment: $R^C_i - R_i$ \\
$I$                         & Financial/Operational Impact Score ($\in \{1,\ldots,10\}$) \\
$A$                         & Degree of Autonomy ($\in \{1,\ldots,5\}$) \\
$\mathcal{T}$               & Weighted Tool Complexity Score \\
$w_s$                       & Weight for tool severity class $s$ \\
$\alpha$                    & Calibration parameter (= 0.6) \\
$\beta_F,\; \beta_B$        & Failed- and blocked-ancestor penalty coefficients \\
$\mathcal{A}(v_j)$         & Ancestor set of $v_j$ in $G$ \\
$\mathcal{D}(v_i)$         & Descendant set of $v_i$ in $G$ \\
$\mathbf{g}_{v_i}$          & Golden Path centroid for agent $v_i$ \\
$\mathbf{p}_{v_i}^{(t)}$   & Rolling production trajectory centroid at time $t$ \\
$D_t$                       & Trajectory Drift Metric at time $t$ (Eq.~\ref{eq:drift}) \\
$\delta$                    & Drift threshold (calibrated via matched bootstrap) \\
$\alpha_{\mathrm{fp}}$      & False-positive rate target for $\delta$ calibration \\
$\phi$                      & Sentence embedding function \\
$N_{\mathrm{val}}$          & Validation corpus size for Golden Path construction \\
\bottomrule
\end{tabular}
\end{table}

% ─────────────────────────────────────────────────────────────────────────────
\section{Recommended Calibration Parameters}
\label{app:calibration}
% ─────────────────────────────────────────────────────────────────────────────

\begin{table}[H]
\centering
\caption{Default Calibration Parameters by Tier}
\begin{tabular}{@{}lrrr@{}}
\toprule
Parameter & Tier 1 & Tier 2 & Tier 3 \\
\midrule
$N_{\mathrm{val}}$ (Golden Path samples)     & 500 & 200 & 50 \\
$\alpha_{\mathrm{fp}}$ (false-positive rate for $\delta$) & 0.05 & 0.01 & 0.10 \\
Hard Flag multiplier on $\delta$             & 1.5 & 1.5 & 1.5 \\
$\tau_{\mathrm{review}}$ (Soft Flag resolution, days) & 2 & 5 & 14 \\
Prompt hash check frequency                 & Per-invocation & Daily & Weekly \\
Base model version polling frequency        & Hourly & Daily & Weekly \\
$\beta_F$ (failed-ancestor penalty)         & \multicolumn{3}{c}{15.0 (uniform)} \\
$\beta_B$ (blocked-ancestor penalty)        & \multicolumn{3}{c}{5.0 (uniform)} \\
$\alpha$ (DoA risk blend weight)            & \multicolumn{3}{c}{0.6 (uniform)} \\
\bottomrule
\end{tabular}
\end{table}

% ─────────────────────────────────────────────────────────────────────────────

\end{document}